\newtheorem{Thm}{Theorem}
\newtheorem{prop}{Proposition}
\newtheorem{remark}{Remark}
\newtheorem{defn}{Definition}
\newtheorem{coro}{Corollary}
\def\*#1{\mathbf{#1}} %define \*something = \mathbf{something}
\title{The existence of horizontal envelopes in the 3D-Heisenberg group}
\author{Yen-Chang Huang \\ Xinyang Normal University, \\Henan, China\\ ychuang@xynu.edu.cn}
\date{}
\begin{document}
\maketitle
\begin{abstract}
By using the support function on the $xy$-plane, we show the necessary and sufficient conditions for the existence of envelopes of horizontal lines in the 3D-Heisenberg group. A method to construct horizontal envelopes from the given ones is also derived, and we classify the solutions satisfying the construction.
\end{abstract}

\section{Introduction}
Given a family of lines in $\mathbb{R}^2$. The envelope of the family of lines $F(\lambda, x,y)$, depending on the parameter $\lambda$, is defined to be a curve on which every line in the family contacts exactly at one point. A simple example is to find the envelope of the family of lines $F(\lambda,x,y)=(1-\lambda)x+\lambda y -\lambda(1-\lambda)=0$ for $\lambda\in [0,1]$ and $(x,y)\in [0,1]\times [0,1]$. Consider the system of differential equations
\begin{align}\label{system1}
\left\{
\begin{array}{rl}
F(\lambda, x,y)&=0,\\
\frac{\partial F(\lambda,x,y)}{\partial\lambda}&=0.
\end{array}
\right.
\end{align}The second equation helps us find $\lambda$ in terms of $x,y$, and substitute $\lambda$ into the first equation to have the envelope
\begin{align}\label{envelope}
x^2 + y^2 - 2xy - 2x - 2y + 1=0.
\end{align}
Note that there are totally three variables $\lambda, x,y$, two equations in the system of differential equations \eqref{system1}, and hence one gets the solution \eqref{envelope} which is a one-dimensional curve in $\mathbb{R}^2$. One of applications in Economics to finding the envelopes is the Envelope Theorem which is related to gain the optimal production functions for given input and output prices \cite{CS, MS}. Recently the theorem has been generalized to the functions of multivariables with non-differentiability condition \cite{GA}. In general, it is impossible to seek the envelope of a family of lines in the higher dimensional Euclidean spaces $\mathbb{R}^n$ for $n\geq 3$ due to the over-determined system of differential equations similar to \eqref{system1} (Sec. 26, Chapter 2 \cite{E}). However, by considering the $3$-dimensional Heisenberg group $\mathbb{H}_1$ as a sub-Riemannian manifold, the horizontal envelopes can be obtained by a family of horizontal lines. In this notes we will study the problem of existence for horizontal envelopes in $\mathbb{H}_1$.

We recall some terminologies for our purpose. For more details about the Heisenberg groups, we refer the readers to \cite{CHL, M, MR, RR}. The $3$-dimensional Heisenberg group $\mathbb{H}_1$ is the Lie group $(\mathbb{R}^3, \star)$, where the group operation $\star$ is defined, for any point $(x,y,z)$, $(x',y',z')\in\mathbb{R}^3$, by
$$(x,y,z)\star (x',y',z')=(x+x',y+y',z+z'+yx'-xy').$$
For $p\in\mathbb{H}^1$, the \textit{left translation} by $p$ is the diffeomorphism $L_p(q):=p\star q$. A basis of left invariant vector fields (i.e., invariant by any left translation) is given by
$$\mathring{e}_1(p):=\frac{\partial}{\partial x}+y\frac{\partial}{\partial z}, \  \mathring{e}_2(p):=\frac{\partial}{\partial y}-x\frac{\partial}{\partial z}, \ T(p):=(0,0,1).$$
The \textit{horizontal distribution} (or \textit{contact plane} $\xi_p$ at any point $p\in \mathbb{H}^1)$ is the smooth planar one generated by $\mathring{e}_1(p)$ and $\mathring{e}_2(p)$. We shall consider on $\mathbb{H}_1$ the (left invariant) Riemannian $g:=\langle .,.\rangle$ so that $\{\mathring{e}_1,\mathring{e}_2,T\}$ is an orthonormal basis in the Lie algebra of $\mathbb{H}^1$. The endomorphism $J:\mathbb{H}_1\rightarrow \mathbb{H}_1$ is defined such that $J(\mathring{e}_1)=\mathring{e}_2$, $J(\mathring{e}_2)=-\mathring{e}_1$, $J(T)=0$ and $J^2=-1$.

A curve $\gamma: I \subset \mathbb{R}\rightarrow \mathbb{H}_1$ is called \textit{horizontal} (or \textit{Legendrian}) if its tangent at any point on the curve is on the contact plane. More precise, if we write the curve in the coordinates $\gamma:=(x,y,z)$ with the tangent vector $\gamma'=(x',y',z')=x'\mathring{e}_1(\gamma)+y'\mathring{e}_2(\gamma)+T(z'-x'y+xy')$, then the curve $\gamma$ is horizontal if and only if
\begin{align}\label{horizontal}
z'-x'y+xy'=0,
\end{align}where the prime $'$ denotes the derivative with respect to the parameter of the curve. The velocity $\gamma'$ has the natural decomposition
\begin{align*}
\gamma'=\gamma'_\xi+\gamma'_T,
\end{align*}
where $\gamma'_\xi$ (resp. $\gamma'_T$) is the orthogonal projection of $\gamma'$ on $\xi$ along $T$ (resp. on $T$ along $\xi$) with respect to the metric $g$. Recall that a \textit{horizontally regular curve} is a parametrized curve $\gamma(t)$ such that $\gamma'_\xi(u)\neq 0$ for all $u\in I $ (Definition 1.1 \cite{CHL}). Also, in Proposition 4.1 \cite{CHL}, we show that any horizontally regular curve can be uniquely reparametrized by \textit{horizontal arc-length} $s$, up to a constant, such that $|\gamma'_\xi(s)|=1$ for all $s$, and called the curve being \textit{with horizontal unit-speed}.
Moreover, two geometric quantities for horizontally regular curves parametrized by arc-length, the \textit{p-curvature} $k(s)$ and the \textit{contact normality} $\tau(s)$, are defined by
\begin{align*}
k(s)&:=\langle\frac{d\gamma'(s)}{ds},J\gamma'(s)\rangle, \\
\tau(s)&:=\langle\gamma'(s),T\rangle,
\end{align*}which are invariant under pseudo-hermitian transformations of horizontally regular curves (Section 4, \cite{CHL}). Note that $k(s)$ is analogous to the curvature of the curve in the Euclidean space $\mathbb{R}^3$, while $\tau(s)$ measures how far the curve is from being horizontal. When the curve $\gamma(u)$ is parametrized by arbitrary parameter $u$ (not necessarily its arc-length $s$), the p-curvature is given by
\begin{align}\label{pcurve}
k(u):=\frac{x'y''-x''y'}{\left((x')^2+(y')^2\right)^{3/2}}(u).
\end{align}

It is clear that a curve $\gamma (s)$ is a horizontal if and only if $\gamma(s)=\gamma_\xi(s)$ for all $s$. One of examples for horizonal curves is the horizontal lines which can be characterized by the following proposition.

\begin{prop}\label{prop1} Any horizontal line $\ell$ in $\mathbb{H}_1$ can be uniquely determined by three parameters $(p,\theta, t)$ for any $p\geq 0$, $\theta\in[0, 2\pi]$, $t\in \mathbb{R}$. Any point on the line can be represented in the coordinates
\begin{align}\label{projpeq3}
\ell: \left\{
\begin{array}{rl}
x&=p\cos\theta - s\ \sin\theta, \\
y&=p\sin\theta + s\ \cos\theta, \\
z&=t         -sp,
\end{array}\right.
\end{align}for all $s\in \mathbb{R}$.
\end{prop}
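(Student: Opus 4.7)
My approach is to reduce the problem to the Hesse normal form for lines in $\mathbb{R}^2$ and then exploit the horizontality equation \eqref{horizontal} to determine the $z$-component.

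First, I would observe that under the projection $\pi:(x,y,z)\mapsto(x,y)$, any straight line $\ell\subset\mathbb{R}^3$ is sent either to a single point or to a line in $\mathbb{R}^2$. If $\pi(\ell)$ were a single point, then $x'\equiv y'\equiv 0$ along $\ell$, and \eqref{horizontal} would force $z'\equiv 0$ as well, contradicting the nondegeneracy of $\ell$. Hence $\pi(\ell)$ is a genuine line. Every such line in $\mathbb{R}^2$ admits a Hesse normal form: there exist $p\geq 0$ and $\theta\in[0,2\pi]$ (with $\theta$ determined up to $\pi$ when $p=0$) such that $p$ is the distance from the origin to $\pi(\ell)$ and $(\cos\theta,\sin\theta)$ is the unit normal pointing from the origin toward the foot of perpendicular $(p\cos\theta,p\sin\theta)$. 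The arc-length parametrization with this choice of direction is then
$$x(s)=p\cos\theta-s\sin\theta,\qquad y(s)=p\sin\theta+s\cos\theta.$$

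Second, I would substitute these formulas into the horizontality condition $z'=x'y-xy'$. A direct expansion using $\sin^2\theta+\cos^2\theta=1$ yields
$$x'y-xy'=-\sin\theta(p\sin\theta+s\cos\theta)-\cos\theta(p\cos\theta-s\sin\theta)=-p,$$
so that $z'(s)=-p$, and therefore $z(s)=t-sp$ with $t:=z(0)$. This establishes the form \eqref{projpeq3}. Conversely, any triple $(p,\theta,t)$ produces a curve of that form whose constant tangent $(-\sin\theta,\cos\theta,-p)$ satisfies \eqref{horizontal} identically, confirming that the curve is indeed a horizontal line.

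For uniqueness, I would recover the parameters from $\ell$ intrinsically: $p$ is the distance from the origin to $\pi(\ell)$, $\theta$ is determined by the outward unit normal of $\pi(\ell)$, and $t$ is the value of $z$ at the base point $s=0$ that lifts the foot of perpendicular. I do not foresee a substantive obstacle here; the whole proposition rests on the observation that, for a line with arc-length planar projection, the horizontality condition collapses to the constant relation $z'=-p$. The only mild delicacy is the degenerate case $p=0$, where $\theta$ and $\theta+\pi$ parametrize the same line with reversed orientation, which is why one allows $\theta\in[0,2\pi]$ rather than $[0,\pi]$.
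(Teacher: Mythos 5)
Your proposal is correct and follows essentially the same route as the paper: project $\ell$ to the $xy$-plane, write the projection in Hesse normal form with foot of perpendicular $(p\cos\theta,p\sin\theta)$ and direction $(-\sin\theta,\cos\theta)$, and use horizontality to force $z'=-p$, hence $z=t-sp$. The only cosmetic difference is that you integrate the scalar condition \eqref{horizontal} directly, whereas the paper writes the direction vector as $A\mathring{e}_1(Q')+B\mathring{e}_2(Q')$ in the contact plane and matches coefficients; your added remarks on the non-vertical projection and the $p=0$ ambiguity are correct bonuses not present in the paper.
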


When $s=0$, denote by
\begin{align}\label{Q}
Q':=(x,y,z)=(p\cos\theta, p\sin\theta, t).
\end{align}
Observe that $\ell$ is a line through the point $Q'$ with the directional vector $\sin\theta\ \mathring{e}(Q')+\cos\theta\ \mathring{e}(Q')$. The value $p$ actually is the value of the support function $p(\theta)$ for the projection $\pi(\ell)$ of $\ell$ on the $xy$-plane (see the proof of Proposition \ref{prop1} in next section).

Inspired by the envelopes in the plane, with the assistance of contact planes in $\mathbb{H}_1$ we introduce the horizontal envelope tangent to a family of horizontal lines.
\begin{defn}
Given a family of horizontal lines in $\mathbb{H}_1$. A \textbf{horizontal envelope} is a horizontal curve $\gamma$ such that $\gamma$ contacts with exactly one line in the family at one point.
\end{defn}

Back to Proposition \ref{prop1}, $p$ actually is the distance of the projection $\pi(\ell)$ of $\ell$ onto the $xy$-plane to the origin, and $\theta$ is the angle from the $x$-axis to the line perpendicular to the projection (see Fig. \ref{envelope1} next section). To obtain the horizontal envelope $\gamma$, it is natural to consider $p$ as a function of $\theta$, namely, the support function $p=p(\theta)$ for the projection $\pi(\gamma)$ of the curve on the $xy$-plane. Moreover, by \eqref{Q} we know that the value $t$ dominates the height of the point $Q'$. As long as $\theta$ is fixed, the projection of $Q'$ onto the $xy$-plane is fixed. Thus, we may consider $t$ as a function of $\theta$. Under these circumstances, the family of horizontal lines is only controlled by one parameter $\theta$, and so the following is our main theorem.

\begin{Thm}\label{mainthm1}
Let $p=p(\theta)\geq 0$ and $t=t(\theta)$ be $C^1$-functions defined on $\theta\in[0,2\pi]$ satisfying
\begin{align}\label{thmcondi}
t'=(p')^2-p^2.
\end{align} There exists a horizontally regular curve $\gamma$ parametrized by arc-length such that the curve is the horizontal envelope of the family of horizontal lines determined by $\theta$ (and hence $p$ and $t$). In the coordinates, the envelope $\gamma=(x,y,z)$ can be represented by
\begin{align}\label{maineq1}
\gamma:
\left\{
\begin{array}{rl}
x(\theta)&= p(\theta)\cos\theta -p'(\theta) \sin\theta, \\
y(\theta)&= p(\theta)\sin\theta +p'(\theta) \cos\theta, \\
z(\theta)&=t(\theta)-p'(\theta) p(\theta).
\end{array}
\right.
\end{align}
Moreover, if $p$ is a $C^2$-function, the $p$-curvature and the contact normality of $\gamma$ are given by $k=\frac{1}{p+p''}$ and $\tau\equiv 0$.
\end{Thm}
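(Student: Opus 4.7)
The plan is to build $\gamma$ directly from the line parametrization of Proposition~\ref{prop1} by imposing the envelope condition inherited from the $xy$-projection. Classically, the envelope of the planar family with support function $p(\theta)$ is obtained by differentiating $x\cos\theta+y\sin\theta-p(\theta)=0$ in $\theta$ and eliminating the line parameter; substituting the parametrization \eqref{projpeq3} into this yields the internal value $s=p'(\theta)$. Plugging $s=p'(\theta)$ back into \eqref{projpeq3} produces the coordinates \eqref{maineq1}, which already commits us to the correct $x$- and $y$-components. The only freedom left lives in the $z$-coordinate, where the function $t(\theta)$ remains to be constrained.

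Next I would verify the horizontality of $\gamma$. Differentiating \eqref{maineq1} in $\theta$ gives $x'=-(p+p'')\sin\theta$, $y'=(p+p'')\cos\theta$ and $z'=t'-p''p-(p')^2$, and a direct trigonometric computation shows $xy'-x'y=p(p+p'')$. Substituting into the horizontality condition \eqref{horizontal} produces $z'=-p^2-pp''$, and matching this against the expression for $z'$ above gives exactly the hypothesis $t'=(p')^2-p^2$. This confirms that $\gamma$ is horizontal. The envelope property is now visible on the nose: by construction each line of the family passes through $\gamma(\theta)$ at internal parameter $s=p'(\theta)$, and the computation above gives $\gamma'(\theta)=(p+p'')\,(-\sin\theta,\cos\theta,-p)$, which is a scalar multiple of the line's direction vector. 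Hence $\gamma$ is tangent to every line in the family at a single point.

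For the final assertions, the horizontal projection $\gamma'_{\xi}=x'\mathring{e}_1+y'\mathring{e}_2$ has length $|p+p''|$, so under the generic assumption $p+p''\neq 0$ the curve is horizontally regular and by Proposition~4.1 of \cite{CHL} admits a reparametrization by horizontal arc-length. Since $\gamma$ is horizontal, $\gamma'_T=0$, so $\tau=\langle \gamma',T\rangle\equiv 0$. Plugging the expressions for $x',y',x'',y''$ into \eqref{pcurve}, the numerator collapses to $(p+p'')^2$ and the denominator to $|p+p''|^3$, whence $k=1/(p+p'')$.

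The main obstacle is conceptual rather than technical: one has to recognize that the Euclidean envelope construction on the $xy$-projection forces $s=p'(\theta)$, and that the horizontality condition on the lifted curve then collapses to the single scalar ODE $t'=(p')^2-p^2$ for $t$. Once this is seen, all the remaining steps are mechanical differentiations and a short trigonometric identity.
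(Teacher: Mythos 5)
Your proposal is correct and follows essentially the same route as the paper: derive $s=p'(\theta)$ from differentiating the support-function relation (the content of the paper's Proposition~\ref{touch}), observe that the horizontality condition \eqref{horizontal} applied to \eqref{maineq1} reduces exactly to $t'=(p')^2-p^2$, and read off $k$ and $\tau$ by direct computation. Your explicit verification that $\gamma'(\theta)=(p+p'')(-\sin\theta,\cos\theta,-p)$ is parallel to the line's direction vector is a small but welcome addition that the paper leaves implicit.
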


Since the functions $p(\theta)$ and $t(\theta)$ uniquely determine a family of horizontal lines by ~{Proposition \ref{prop1}}, we say that the horizontal envelope $\gamma$ in Theorem \ref{mainthm1} is generated by the family of horizontal lines $(p(\theta),\theta, t(\theta))$.

\begin{remark}\label{pconst}
A geodesic in $\mathbb{H}_1$ is a horizontally regular curve with minimal length with respect to the Carnot-Carath\'{e}odory distance. For two given points in $\mathbb{H}_1$ one can find, by Chow's connectivity Theorem (\cite{Gr} p. 95), a horizontal curve joining these points. Note that when $p\equiv c$, a constant function, by \eqref{projpeq3}, \eqref{thmcondi}, and \eqref{maineq1}, the horizontal envelope $\gamma$ generated by the family of horizontal lines $\left\{(c, \theta, t(\theta)),\ \theta\in[0,2\pi]\right\}$ is a (helix) geodesic with radius $c$; the same result occurs if $t$ is also a constant function. In particular, when $p\equiv 0$, the horizontal envelope is the line segment contained in the $z$-axis.
\end{remark}

Besides, the reversed statement of Theorem \ref{mainthm1} also holds for horizontal curves with "jumping" ends.
\begin{Thm}\label{mainthm2}
Let $\gamma:\theta \in [0,2\pi]\mapsto (x(\theta),y(\theta),z(\theta))\in \mathbb{H}_1$ be a horizonal curve with finite length. Suppose $x(0)=x(2\pi)$, $y(0)=y(2\pi)$, and $z(0)\neq z(2\pi)$. Then the set of its tangent lines is uniquely determined by $p=p(\theta)\geq 0$ and $t=t(\theta)$ satisfying \eqref{thmcondi}.
\end{Thm}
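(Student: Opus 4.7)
The plan is to reverse the construction of Theorem \ref{mainthm1}. Given the horizontal curve $\gamma$, at each parameter value its tangent line is a horizontal line, so by Proposition \ref{prop1} it is uniquely characterized by a triple $(p,\vartheta,t)$: the angle $\vartheta$ records the direction $(-\sin\vartheta,\cos\vartheta)$ of the projected tangent, $p\geq 0$ is the distance of that projected line from the origin, and $t$ is the $z$-coordinate at the foot of the perpendicular. My first step is to reparametrize $\gamma$ so that the curve parameter coincides with this angular variable $\vartheta=\theta$. Once this is in place, the functions $p(\theta)$ and $t(\theta)$ along the curve are automatic, and the theorem reduces to verifying that they satisfy \eqref{thmcondi}.

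Second, I would derive the coordinate formula \eqref{maineq1}. Since $\gamma(\theta)$ lies on its own tangent line, Proposition \ref{prop1} yields some $s_0(\theta)\in\mathbb{R}$ with $\gamma(\theta)=(p\cos\theta - s_0\sin\theta,\; p\sin\theta + s_0\cos\theta,\; t - s_0 p)$. Differentiating in $\theta$ and invoking the fact that $\gamma'(\theta)$ must be parallel to the direction vector $(-\sin\theta,\cos\theta,-p)$ of the tangent line, the component of $(x'(\theta),y'(\theta))$ along the radial direction $(\cos\theta,\sin\theta)$ has to vanish; a direct calculation gives $s_0(\theta)=p'(\theta)$, which recovers \eqref{maineq1}. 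Imposing the horizontal constraint \eqref{horizontal} on that formula then produces a short computation: from $x'y - xy' = -p(p+p'')$ together with $z'=t'-(p')^2-pp''$, the equation $z'=x'y-xy'$ collapses to $t'=(p')^2-p^2$, which is precisely \eqref{thmcondi}. The uniqueness part of Proposition \ref{prop1} then ensures that $\theta\mapsto (p(\theta),\theta,t(\theta))$ genuinely determines the set of tangent lines of $\gamma$.

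The main obstacle is the first, reparametrization step: one must argue that as the original parameter traverses $[0,2\pi]$ the tangent angle $\vartheta$ sweeps out a full interval of length $2\pi$, so that $p$ and $t$ are globally defined functions on $[0,2\pi]$. This is where the hypothesis $z(0)\neq z(2\pi)$ enters. Using the horizontal equation $z'=x'y-xy'$ together with $x(0)=x(2\pi)$ and $y(0)=y(2\pi)$, Green's theorem yields $z(2\pi)-z(0)=-2A$, where $A$ is the signed area enclosed by the planar projection $\pi(\gamma)$. The assumption forces $A\neq 0$, excluding degenerate projections (such as a retraced arc, or a closed loop with zero turning number) on which the angular reparametrization would fail. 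Any remaining regularity issues, such as intervals where the angle $\vartheta$ is only locally monotone in the original parameter, are handled by pasting the local formulas together and relying on the pointwise uniqueness of the triple $(p,\vartheta,t)$ attached to each tangent line.
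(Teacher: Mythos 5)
Your proposal follows essentially the same route as the paper's proof: recover $p=x\cos\theta+y\sin\theta$ and $t=z+sp$ from the tangency data, identify the line parameter at the point of tangency as $s_0=p'$, and deduce \eqref{thmcondi} by imposing the horizontality equation $z'=x'y-xy'$ on the resulting formula for $\gamma$. Your Green's-theorem observation that $z(2\pi)-z(0)=-2A$ usefully makes explicit where the hypothesis $z(0)\neq z(2\pi)$ enters (the paper's proof never invokes it, deferring the area identity to its closing Santal\'{o} remark), though be aware that nonzero signed area by itself does not force the tangent angle to sweep $[0,2\pi]$ monotonically, a point left equally unresolved in the paper's own argument.
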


We introduce a method to construct horizontal envelopes.
\begin{coro}\label{coro1}
Let $p_i=p_i(\theta)\geq 0, t_i=t_i(\theta)$ be $C^1$-functions defined on $[0,2\pi]$ satisfying \eqref{thmcondi} for $i=1,2$. Suppose $\gamma_i=(x_i,y_i,z_i)$ is a horizontal envelope generated by the family of horizontal lines $(p_i,\theta, t_i)$ for $i=1,2$. Denote by $p=p_1+p_2$ and $t=t_1+t_2$. The curve $\gamma=(x,y,z)$ is a horizontal envelope generated by a family of horizontal lines $(p(\theta),\theta, t(\theta))$ if and only if
\begin{align}\label{coro1condi}
p_1p_2=p'_1p'_2.
\end{align}
\end{coro}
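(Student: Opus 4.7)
The plan is to reduce the corollary to a short algebraic identity by invoking Theorem \ref{mainthm1} as an if-and-only-if criterion. By Theorem \ref{mainthm1}, the curve $\gamma$ obtained from the formulas \eqref{maineq1} with $p:=p_1+p_2$ and $t:=t_1+t_2$ is a horizontal envelope of the family $(p(\theta),\theta,t(\theta))$ exactly when the compatibility equation \eqref{thmcondi} is satisfied; the hypothesis $p\geq 0$ is automatic since each $p_i\geq 0$. Thus the entire content of Corollary \ref{coro1} is to decide, under the standing assumption that each $(p_i,t_i)$ satisfies \eqref{thmcondi}, when the sum $(p_1+p_2,\,t_1+t_2)$ does too.

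I would first use linearity of the derivative together with the hypothesis $t_i'=(p_i')^2-p_i^2$ to rewrite
\begin{align*}
t'=t_1'+t_2'=(p_1')^2-p_1^2+(p_2')^2-p_2^2.
\end{align*}
Next, expanding $(p')^2-p^2$ from $p=p_1+p_2$ gives
\begin{align*}
(p')^2-p^2=(p_1')^2+2p_1'p_2'+(p_2')^2-p_1^2-2p_1p_2-p_2^2,
\end{align*}
so that
\begin{align*}
(p')^2-p^2-t'=2(p_1'p_2'-p_1p_2).
\end{align*}
Hence \eqref{thmcondi} holds for the pair $(p,t)$ if and only if $p_1p_2=p_1'p_2'$, which is exactly \eqref{coro1condi}.

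There is no genuine obstacle; the corollary is essentially a two-line algebraic consequence of Theorem \ref{mainthm1}. The one point requiring a little care is that I am using Theorem \ref{mainthm1} as an iff-statement: the argument behind it shows that the curve $\gamma$ defined by \eqref{maineq1} satisfies the horizontality condition \eqref{horizontal} precisely when \eqref{thmcondi} holds, so both the ``if'' and ``only if'' directions of the corollary reduce to the same compatibility test. As a sanity check, by linearity of the first two components of \eqref{maineq1} in $p$ one sees that $x=x_1+x_2$ and $y=y_1+y_2$, whereas the vertical coordinate is $z=z_1+z_2-(p_1'p_2+p_2'p_1)$; the extra term records the failure of horizontal lifting to be additive, reflecting the non-commutativity of the group law $\star$.
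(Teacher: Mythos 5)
Your proposal is correct and follows essentially the same route as the paper: both invoke Theorem \ref{mainthm1} to reduce the corollary to checking the compatibility condition \eqref{thmcondi} for $(p_1+p_2,\,t_1+t_2)$, and both verify by the same two-line expansion that $t'-(p')^2+p^2=2(p_1p_2-p_1'p_2')$, so \eqref{thmcondi} holds if and only if \eqref{coro1condi} does. Your added remarks on the $p\geq 0$ hypothesis and the non-additivity of the $z$-component are correct but not needed.
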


By Corollary \ref{coro1} and Remark \ref{pconst}, we know that if at least one of $\gamma_1$ or $\gamma_2$ is a (helix) geodesic with nonzero radius, $\gamma$ can not be a horizontal envelope.

Now we seek the classification of functions $p_1$, $p_2$ satisfying the condition $p_1p_2=p'_1p'_2$ in Corollary \ref{coro1}. Actually, for any subinterval $[a,b]\subset [0,2\pi]$ such that $p_i\neq 0$ and $p'_i\neq 0$ for $i=1,2$, the condition \eqref{coro1condi} is equivalent to $p_1(\theta)=p_1(a)\exp\left(\int_a^{\theta}\frac{p_2(\alpha)}{p'_2(\alpha)}d\alpha\right)$ for any $\theta\in [a,b]$. In addition, if $p_i(\theta_i)=0$ for some $\theta_i\in [0, 2\pi]$, we may move the horizontal envelope $\gamma$ by a left translation such that $p_i>0$ in $[0,2\pi]$. Without loss of generality we may assume that $p_i>0$ on $[0,2\pi]$ and obtain the corollary of classification.
\begin{coro}\label{classification}
Let $p_i(\theta)>0$ ($i=1,2$) be the $C^2$-functions defined on $[0,2\pi]$ satisfying the condition $p'_1p'_2=p_1p_2$. Suppose $p'_i\neq 0$ and $p''_i\neq 0$ in some subinterval $[a,b]\subset [0,2\pi]$. We have the following results in $[a,b]$:
\begin{enumerate}
\item[(1)] If $p'_1>0, p'_2>0$, $p''_2<0$, then $p''_1>0$.
\item[(2)] If $p'_1>0, p'_2>0$, $p''_2>0$, then $p''_1>0$.
\item[(3)] If $p'_1<0, p'_2<0$, $p''_2<0$, then $p''_1>0$.
\item[(4)] If $p'_1<0, p'_2<0$, $p''_2>0$, then $p''_1<0$.
\end{enumerate}
\end{coro}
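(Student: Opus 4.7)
My plan is to differentiate the hypothesis $p'_1 p'_2 = p_1 p_2$ once in $\theta$, obtaining
\begin{equation*}
p''_1 p'_2 + p'_1 p''_2 \;=\; p'_1 p_2 + p_1 p'_2,
\end{equation*}
which I rearrange as $p''_1 p'_2 = p'_1(p_2 - p''_2) + p_1 p'_2$. Before turning to the four cases, I would record the basic observation that, since $p_1,p_2>0$, the hypothesis forces $p'_1 p'_2 = p_1 p_2 > 0$, so $p'_1$ and $p'_2$ must share a common sign. This is precisely what restricts the possibilities to the four combinations listed, indexed by $\operatorname{sgn}(p'_1)=\operatorname{sgn}(p'_2)$ paired with $\operatorname{sgn}(p''_2)$.

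For cases (1) and (3), where $p''_2 < 0$, the factor $p_2 - p''_2$ is automatically positive, so each summand on the right-hand side of the rearranged identity carries the same sign as the product $p'_1 p'_2 > 0$. Consequently $p''_1 p'_2 > 0$, and dividing through by $p'_2$ with the sign appropriate to each case yields $p''_1 > 0$. Both cases fall out simultaneously from this single sign-tracking observation, with no further input needed.

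For cases (2) and (4), where $p''_2 > 0$, the term $p_2 - p''_2$ is not sign-determined and the clean argument above collapses. Here I would eliminate $p'_1$ via the substitution $p'_1 = p_1 p_2/p'_2$ (permissible since $p'_2 \neq 0$ on $[a,b]$), which reduces the identity to
\begin{equation*}
p''_1 \;=\; \frac{p_1}{(p'_2)^2}\bigl(p_2^2 + (p'_2)^2 - p_2 p''_2\bigr),
\end{equation*}
so that $\operatorname{sgn}(p''_1) = \operatorname{sgn}\bigl(p_2^2 + (p'_2)^2 - p_2 p''_2\bigr)$. From here it is a matter of showing that the bracketed quantity is positive in case (2) and negative in case (4).

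The hard part will be exactly this sign determination for cases (2) and (4): the sign is not forced by $\operatorname{sgn}(p''_2)$ alone, so one cannot finish by elementary positivity arguments as in (1) and (3). I expect the resolution to come from playing the displayed identity against its symmetric counterpart obtained by swapping indices $1 \leftrightarrow 2$, and then using the coupling $p'_1 p'_2 = p_1 p_2$ on the $[a,b]$ subinterval (where all the non-vanishing hypotheses hold uniformly) to pin down the two remaining sign claims.
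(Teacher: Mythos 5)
Your handling of cases (1) and (3) is essentially the paper's: differentiate $p'_1p'_2=p_1p_2$ and chase signs in $p''_1p'_2=p'_1p_2+p_1p'_2-p'_1p''_2$. One slip in the write-up: in case (3) the two summands on the right-hand side are both \emph{negative} (each carries the sign of $p'_1$, respectively $p'_2$, not of the product $p'_1p'_2$), so one gets $p''_1p'_2<0$ and then $p''_1>0$ only after dividing by $p'_2<0$; your conclusion survives, but the intermediate claim ``each summand carries the same sign as $p'_1p'_2>0$, consequently $p''_1p'_2>0$'' is false as stated for (3).

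The genuine gap is in cases (2) and (4). You correctly reduce them to the sign of $p_2^2+(p'_2)^2-p_2p''_2$ --- this is exactly the paper's quantity \eqref{quantity1}, which it reaches instead through the representation $p_1(\theta)=p_1(a)\exp\bigl(\int_a^\theta p_2/p'_2\,d\alpha\bigr)$ --- but you then stop, calling the sign determination ``the hard part'' and gesturing at the identity with indices swapped. That is not a proof, and the difficulty is real: the listed sign hypotheses do not by themselves force the sign of $p_2^2+(p'_2)^2-p_2p''_2$. For instance, near $\theta=0$ the function $p_2(\theta)=1+e^{3\theta}$ has $p_2,p'_2,p''_2>0$ while $p_2^2+(p'_2)^2-p_2p''_2=4+9-18<0$ at $\theta=0$, and the companion $p_1$ defined by the exponential formula has $p_1>0$ and $p'_1=p_1p_2/p'_2>0$; so no elementary sign-juggling with the swapped identity can settle case (2) from the stated hypotheses alone. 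The paper closes (2) and (4) by asserting the positivity of the logarithmic derivatives $\bigl(\log(p_2/p'_2)\bigr)'$ and $\bigl(\log(p_2/\sqrt{-p'_2})\bigr)'$, which yields the auxiliary inequalities $(p'_2)^2>p_2p''_2$ in (2) and $(p'_2)^2<p_2p''_2/2$, $p_2^2<p_2p''_2/2$ in (4); this is an additional input that your proposal neither identifies nor supplies (and which, as the example above indicates, does not follow from the hypotheses as listed). Until such an inequality is imported and justified, your cases (2) and (4) remain unproved.
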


Finally, we emphasis that unlike the differential system \eqref{system1} mentioned in the first paragraph, the horizontal envelope in $\mathbb{H}_1$, in general, does not have the exact expression similar to the one in \eqref{envelope}. Indeed, an alternative expression of a horizontal line can be obtained by the intersection of two planes in $\mathbb{H}_1$
\begin{align}
F_1(x,y,z,\theta):&=\cos\theta x +\sin\theta y-p=0, \label{F1}\\
F_2(x,y,z,\theta):&=-p\sin\theta x + p\cos\theta y + z-t=0, \label{F2}
\end{align}where the set of points such that $F_1(x,y,z,\theta)=0$ is a vertical plane passing through the line $p=x\cos\theta+y\sin\theta$, and the set of $F_2(x,y,z,\theta)=0$ is the contact plane spanned by $\mathring{e}_1(Q')$ and $\mathring{e}_2(Q')$ through the point $Q'=(p\cos\theta, p\sin\theta, t)$ (see Fig. \ref{fig2}).
\begin{figure}[ht!]
   \includegraphics[width=1.05\textwidth]{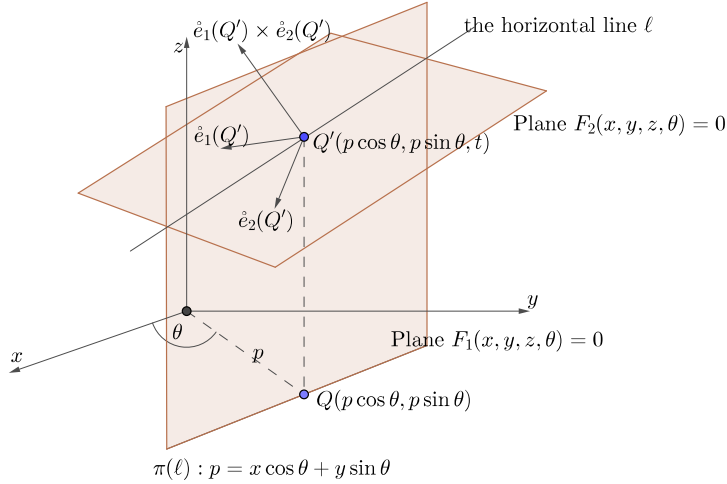}
   \centering \caption{Alternative expression of horizontal line $\ell$ in $\mathbb{H}_1$}
    \label{fig2}
\end{figure}
By taking the derivatives, respectively, of $F_1$ and $F_2$ with respect to $\theta$, we have
\begin{align}
\frac{\partial F_1}{\partial\theta}&=-x\sin\theta +y\cos\theta-p' =0 \label{F1d},\\
\frac{\partial F_2}{\partial\theta}&=(-p'\sin\theta-p\cos\theta)x+(p'\cos\theta-p\sin\theta)y-t'=0. \label{F2d}
\end{align}
Using \eqref{F1}, \eqref{F1d}, and substituting $p$, $p'$, into \eqref{F2d}, the condition $\frac{\partial F_2}{\partial\theta}=0$ is equivalent to \eqref{thmcondi}. Therefore, it may be only for seldom special cases that one can eliminate the parameter $\theta$ in \eqref{F1}, \eqref{F2}, and \eqref{F1d}, to find the exact expression for the horizontal envelope.

\section{Proofs of Theorems and Corollaries}
In this section we shall give the proofs of Theorem \ref{mainthm1} and \ref{mainthm2}. First we prove Proposition \ref{prop1} which plays the essential role in the notes.

\begin{proof}(Proposition \ref{prop1})
Although the proof has been shown in \cite{CHL} Proposition 8.2, for the self-contained reason we describe the proof here. Suppose $\pi(\ell)$ is the projection of horizontal line $\ell\in\mathbb{H}_1$ onto the $xy$-plane and the function $p=p(\theta)$ is the distance from the origin to $\pi(\ell)$ with angle $\theta$ from the positive direction of the $x$-axis (see Fig. \ref{envelope1}).
\begin{figure}[ht!]
   \includegraphics[width=0.8\textwidth]{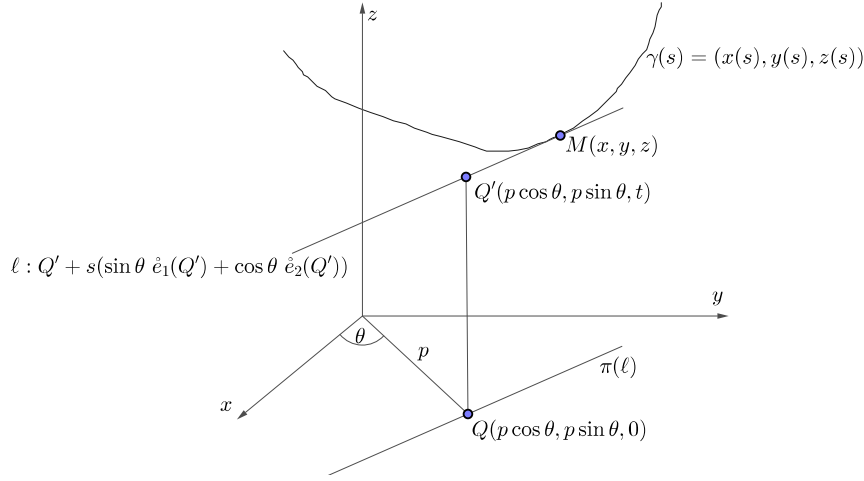}
   \centering \caption{The horizontal line $\ell$ in $\mathbb{H}_1$}
    \label{envelope1}
\end{figure}

Let the point $Q(p\cos\theta, p\sin\theta, 0)$ be the intersection of the line through the origin perpendicular to $\pi(\ell)$. We choose the unit directional vector $(-\sin\theta, \cos\theta,0)$ along $\pi(\ell)$, and so any point $(x,y,0)$ on $\pi(\ell)$ can be parametrized by arc-length $s$
\begin{align}\label{projpeq1}
\left\{
\begin{array}{rl}
x&=p\cos\theta - s\ \sin\theta, \\
y&=p\sin\theta + s\ \cos\theta.
\end{array}\right.
\end{align}
Denote the lift of the point $Q$ on $\ell$ by $Q'$. We may assume $Q'=(p\cos\theta, p\sin\theta, t)$ for some $t\in \mathbb{R}$. Since $\ell$ is horizontal, the tangent line of $\ell$ must be on the contact plane $\xi$. Thus, the parametric equations for $\ell$ can be represented by
\begin{align}\label{projpeq2}
\ell: (x,y,z)=(p\cos\theta, p\sin\theta, t) + s (A \mathring{e}_1(Q')+B \mathring{e}_2(Q')),
\end{align}
for some constants $A,B$ to be determined. Expand \eqref{projpeq2} by using the definitions of $\mathring{e}_1$ and $\mathring{e}_2$, and compare the coefficients with \eqref{projpeq1}, one gets $A=-\sin\theta$ and $B=\cos\theta$. Thus, the parametric equations for $\ell$ are obtained as shown in \eqref{projpeq3}. By \eqref{projpeq3}, the parameters $p,\theta$, and $t$, uniquely determine the horizontal line $\ell$.
\end{proof}

\begin{prop}\label{touch}
Given any horizontal curve $\gamma(s)=(x(s),y(s),z(s))$ parametrized by horizontal arc-length $s$. Suppose the horizontal line $\ell$ determined by $(p,\theta, t)$ intersects $\gamma$ at the unique point $M(x,y,z)$ on the contact plane $\xi_M$ (see Fig. \ref{envelope1}). If $p=p(\theta)$ is a function of $\theta$, then the intersection $M(x,y,z)$ can be uniquely represented by
\begin{align}\label{tngpt}
\left\{
\begin{array}{rl}
x&= p\cos\theta -p' \sin\theta, \\
y&= p\sin\theta +p' \cos\theta, \\
z&=t-p' p,
\end{array}
\right.
\end{align}where $p'$ denotes the partial derivative of the function $p$ with respect to $\theta$.
\end{prop}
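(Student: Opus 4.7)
The plan is to write $M$ as an unknown point on $\ell_\theta$ via Proposition \ref{prop1}, then use the tangency of $\ell_\theta$ to the horizontal curve $\gamma$ at $M$ to fix the parameter value along $\ell_\theta$.

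First, by Proposition \ref{prop1} there exists a scalar $s_0 \in \mathbb{R}$ such that
\begin{align*}
M = (p\cos\theta - s_0\sin\theta,\ p\sin\theta + s_0\cos\theta,\ t - s_0 p).
\end{align*}
The task is to show $s_0 = p'(\theta)$, since substituting back into this ansatz produces \eqref{tngpt}. Because the contact point $M$ depends on $\theta$ through $p(\theta)$, $t(\theta)$, and some unknown function $s_0(\theta)$, the formula above serves as a $\theta$-parametrization of $\gamma$.

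Second, since both $\ell_\theta$ and $\gamma$ are horizontal and meet at the single point $M$, their tangent vectors at $M$, which both lie in the $2$-dimensional plane $\xi_M$, must be proportional. By Proposition \ref{prop1} the direction of $\ell_\theta$ projects to $(-\sin\theta,\cos\theta)$ on the $xy$-plane. Differentiating the $x$- and $y$-coordinates of $M$ with respect to $\theta$ yields
\begin{align*}
\frac{dx}{d\theta} &= (p' - s_0)\cos\theta - (p + s_0')\sin\theta, \\
\frac{dy}{d\theta} &= (p + s_0')\cos\theta + (p' - s_0)\sin\theta,
\end{align*}
and imposing $(dx/d\theta,\, dy/d\theta) \parallel (-\sin\theta,\cos\theta)$, equivalently $\cos\theta\cdot dx/d\theta + \sin\theta\cdot dy/d\theta = 0$, collapses after a routine trigonometric cancellation to $p'(\theta) - s_0 = 0$. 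Hence $s_0 = p'(\theta)$ and the formula \eqref{tngpt} follows by direct substitution.

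The main obstacle is justifying the tangency step rigorously: the hypothesis only asserts that $\ell_\theta$ and $\gamma$ share a single point $M$ on $\xi_M$, and one must argue that horizontality plus uniqueness of intersection forces the two tangent directions in $\xi_M$ to coincide. An alternative, purely algebraic route avoids this by substituting the ansatz directly into the horizontality equation $z' - x'y + xy' = 0$ for the curve $\theta \mapsto M(\theta)$; a computation reduces the condition to $t' + p^2 - 2p's_0 + s_0^2 = 0$, which combined with the envelope condition $t' = (p')^2 - p^2$ appearing in Theorem \ref{mainthm1} becomes the perfect square $(s_0 - p')^2 = 0$, again giving $s_0 = p'$.
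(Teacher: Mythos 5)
Your primary route is essentially the paper's own argument. The paper writes the support relation $p=x\cos\theta+y\sin\theta$ for the projected intersection point and differentiates it holding $(x,y)$ fixed to obtain $p'=-x\sin\theta+y\cos\theta$; that step is precisely your tangency condition $\cos\theta\,\frac{dx}{d\theta}+\sin\theta\,\frac{dy}{d\theta}=0$, so solving the pair of linear equations for $(x,y)$ and then computing $s_0=-x\sin\theta+y\cos\theta=p'$ and $z=t-p'p$ is the same computation you carry out with the explicit line parameter $s_0$. The ``main obstacle'' you flag --- that uniqueness of the intersection on $\xi_M$ must be upgraded to tangency of $\ell$ and $\gamma$ at $M$ --- is equally present and equally informal in the paper's proof, which simply treats $(x,y)$ as independent of $\theta$ when differentiating (i.e.\ imposes the envelope condition $\partial F_1/\partial\theta=0$); so you have correctly identified where the real content lies rather than introduced a new gap. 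One caution about your ``alternative, purely algebraic route'': as a proof of Proposition \ref{touch} it is circular, because it invokes $t'=(p')^2-p^2$, which is a hypothesis of Theorem \ref{mainthm1} but not of this proposition; in the paper the implication runs the other way, with the formula \eqref{tngpt} established first and the condition \eqref{thmcondi} then extracted from the horizontality of the resulting curve. So keep the tangency argument as the proof and drop, or clearly subordinate, the algebraic variant.
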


\begin{proof}
We first solve the projection $(x,y,0)$ of the intersection $M$ onto the $xy$-plane in terms of $p$ and $\theta$, and then the $z$-component of $M$. By \eqref{projpeq1}, any point $(x,y)$ on the projection of $\ell$ satisfies
\begin{align}\label{projeq4}
p=x\cos\theta+y\sin\theta.
\end{align}
Since $p=p(\theta)$, take the derivative on both sides to get
\begin{align}\label{projeq5}
p'=-x\sin\theta + y\cos\theta.
\end{align}
Use \eqref{projeq4} and \eqref{projeq5} we obtain the first two components of the intersection $M$ on the $xy$-plane, namely,
\begin{align*}
x&=p\cos\theta -p' \sin\theta, \\
y&=p\sin\theta +p'\cos\theta.
\end{align*}
To determine the third component of $M$, $z$, by using \eqref{projpeq1}, \eqref{projeq5}, we have $s=y\cos\theta-x\sin\theta=p'$. Finally, \eqref{projpeq3} implies that $z=t-p'p$. The unique intersection point $M$ immediately implies the uniqueness of the expression \eqref{tngpt} and the result follows.
\end{proof}

Now we prove Theorem \ref{mainthm1}.
\begin{proof}(Theorem \ref{mainthm1})
According to the assumptions for the functions $p$ and $t$, the curve $\gamma(\theta)=(x(\theta),y(\theta),z(\theta))$ defined by \eqref{maineq1} is well-defined.
By Proposition 4.1 \cite{CHL}, since any horizontally regular curve can be reparametrized by its horizontal arc-length, it suffices to show that the curve $\gamma(\theta)$ is horizontal. Indeed, by Proposition \ref{touch}, a straight-forward calculation shows that
\begin{align*}
&z'-x'y+y'x\\
&=\left(t'-p''p-(p')^2 \right)-\left( p'\cos\theta-p\sin\theta-p''\sin\theta-p'\cos\theta\right)\left( p\sin\theta+p'\cos\theta\right)\\
&\hspace{1cm}+\left(p'\sin\theta+p\cos\theta+p''\cos\theta-p'\sin\theta\right)\left(p\cos\theta-p'\sin\theta\right)\\
&=t'-(p')^2+p^2.\\
\end{align*}Thus, $z'-x'y+y'x=0$ if and only if the functions $p$ and $t$ satisfy \eqref{thmcondi}. Therefore the curve defined by \eqref{maineq1} is horizontal by \eqref{horizontal}.

To derive the p-curvature for the curve $\gamma$, substitute \eqref{maineq1} into \eqref{pcurve} and the result follows. It is also clear that $\tau\equiv 0$ since $\gamma$ is horizontal.
\end{proof}

The horizontal length of the horizontal envelope can be represented by the function $p(\theta)$. Actually, by \eqref{maineq1} we have the horizontal length $$L(\gamma):=\int_0^{2\pi}\left[\left(x'(\theta)\right)^2+\left((y'(\theta)\right)^2\right]^{1/2} d\theta=\int_0^{2\pi}|p+p''| d\theta.$$
Compare the p-curvature $k$ in Theorem \ref{mainthm1} and the function on the right-hand side of the integral, we conclude that the length of the horizontal envelope $\gamma$ is the integral of the radius of curvature for the projection $\pi(\gamma)$
$$L(\gamma)=\int_0^{2\pi} \frac{1}{|k(\theta)|} d\theta.$$

Next we show Theorem \ref{mainthm2}.
\begin{proof}(Theorem \ref{mainthm2}) Suppose that the horizontal line $\ell$ represented by $(p,\theta, t)$ is tangent to $\gamma$ at $M(x,y,z)$. Since $M\in \ell$, by \eqref{projpeq3}, we can solve the point $Q'$ representing the horizontal line in terms of $x,y,z$. Let
\begin{align}\label{thm2xyz}
x&=p\cos\theta-s\ \sin\theta, \\
y&=p\sin\theta+s\ \cos\theta, \nonumber \\
z&=t-sp.\nonumber
\end{align}
One can solve
\begin{align}\label{thm2eq1}
s=-x\sin\theta + y\cos\theta.
\end{align}
Substitute $s$ into the third equation in \eqref{thm2xyz} to get
\begin{align}\label{thm2eq2}
t=z+(-x\sin\theta+y\cos\theta)p
\end{align}
The first two equations in \eqref{thm2xyz} imply that
\begin{align}\label{thm2eq3}
p=x\cos\theta+y\sin\theta,
\end{align}which means that the distance $p$ is a smooth function of $\theta$ defined on $[0,2\pi]$ if t$\ell$ intersects $\gamma$ at exactly one point. Similarly, \eqref{thm2eq2} also implies that $t$ is a smooth function of $\theta$. Finally, use \eqref{thm2eq1}\eqref{thm2eq2}\eqref{thm2eq3} it is easy to check that the horizontal line $\ell(p,\theta, t)$ satisfies the condition \eqref{thmcondi} for any $\theta$.
\end{proof}

\begin{proof}(Corollary \ref{coro1})
By Theorem \ref{mainthm1}, it suffices to show that the identity $t'=(p')^2-p^2$ holds. By assumption we have
\begin{align*}
t'-(p')^2+p^2&=t_1'+t_2'-(p_1'+p_2')^2+(p_1+p_2)^2\\
&= -2p_1'p_2'+2p_1p_2.
\end{align*}
Thus, $t'=(p')^2-p^2$ if and only if the identity $p_1'p_2'=p_1p_2$ holds.
\end{proof}

\begin{proof}(Corollary \ref{classification})
\begin{enumerate}
\item[(1)] Take the derivatives for $p'_1p'_2=p_1p_2$ with respect to $\theta$, we have $p''_1p_2'=p'_1p_2+p_1p'_2-p'_1p''_2$. Use the assumptions for the signs of $p_i$, $p'_i$, and $p''_2$, the left-hand side of the equation must be positive, and so $p''_1>0$.
\item[(2)] The condition $p'_1p'_2=p_1p_2$ is equivalent to $p_1(\theta)=p_1(a)\exp{\int_a^\theta  \frac{p_2}{p'_2}d\alpha}$. Take the derivative twice we have $p''_1(\theta)=p_1(a)\exp{\int_a^\theta \frac{p_2}{p'_2}d\alpha}\left( \frac{(p'_2)^2-p_2p''_2+(p_2)^2}{(p'_2)^2}\right)$. Thus, the sign of $p''_1$ is only determined by the numerator
    \begin{align}\label{quantity1}
    (p'_2)^2-p_2 p''_2+(p_2)^2.
    \end{align} We claim that $(p'_2)^2-p_2 p''_2>0$ under the assumptions, and so \eqref{quantity1} is positive. Indeed, if $p_2, p'_2>0$, then $0<\left(\log\frac{p_2}{p'_2}\right)'=\left(\log p_2\right)'-\left(\log p'_2\right)'=\frac{p'_2}{p_2}-\frac{p''_2}{p'_2}$, and the result follows.
\item[(3)] Use the similar method as (1).
\item[(4)] Similar to (2), it suffices to show that the sign of \eqref{quantity1} is negative. We shall show that $(p'_2)^2-\frac{p_2p''_2}{2}<0$ and $-\frac{p_2p''_2}{2}+(p_2)^2<0$, and combine both inequalities to have the result. On one hand, since $p_2>0$ and $p'_2<0$, we have $0<\left(\log\frac{p_2}{\sqrt{-p'_2}}\right)'=\left( \log p_2-\frac{1}{2}\log(-p'_2) \right)'=\frac{p'_2}{p_2}-\frac{p''_2}{2p'_2}$, namely, $(p'_2)^2<\frac{p_2p''_2}{2}$. On the other hand, since $p'_2<0$ and $-\frac{p'_2}{p_2}=\left(\log\frac{1}{p_2}\right)'>0>\frac{-2}{\left( \log (-p'_2)\right)'}=\frac{-2p'_2}{p''_2}$, one has $\frac{-p'_2}{p_2}>\frac{-2p'_2}{p''_2}$. Thus, multiply by $\frac{1}{-p'_2}$ on both sides to have $\frac{p''_2}{2}>p_2$ which implies $\frac{p''_2p_2}{2}>(p_2)^2$ and we complete the proof.
\end{enumerate}
\end{proof}

Finally we point out that the construction in Corollary \ref{coro1} can not obtain a closed horizontal envelope $\gamma$. Indeed, take the derivative with respect to $\theta$ in \eqref{maineq1} and use \eqref{thmcondi}, we have
$$
z'=t'-(p')^2-pp''=-p^2-pp'',
$$which is equivalent to
\begin{align}\label{zcondi}
z(\theta)= -\int_0^{\theta} p^2(\alpha)+p(\alpha)p''(\alpha)d\alpha + z(0) \ \text{, for any } \theta \in [0,2\pi].
\end{align}
If the curve was closed, say $z(0)=z(2\pi)$ and $p(0)=p(2\pi)$, by using Integration by Parts in \eqref{zcondi} one gets
\begin{align}\label{closedarea}
\int_0^{2\pi}p^2(\alpha)-\left(p'(\alpha)\right)^2d\alpha = 0.
\end{align}
However, according to Santal\'{o} \cite{San} (equation (1.8) in I.1.2) we know that
$$F=\frac{1}{2}\int_0^{2\pi}p^2(\alpha)-(p'(\alpha))^2 d\alpha,$$ where $F$ is the enclosed area of the projection $\pi(\gamma)$ of the curve $\gamma$ on the $xy$-plane. Therefore, \eqref{closedarea} is equivalent to that $\gamma$ must be a vertical line segment which contradicts with closeness of the curve.

\end{document}